% !TeX encoding = ISO-8859-1

\documentclass[a4paper,11pt,openany,reqno]{amsart}
%Les paquages de base ----------------------------------------------------------
	\usepackage[latin1]{inputenc}
    \usepackage[T1]{fontenc}
    \usepackage[english]{babel}
    \usepackage{appendix}
    \usepackage[english]{minitoc}
    \usepackage[nice]{nicefrac}
%Style******************************************************************
		%\usepackage{palatino}
		%\usepackage{thmbox}
%		\usepackage[top=2cm, bottom=2cm, left=2cm, right=2cm]{geometry}
\usepackage[top=2.5cm, bottom=2.5cm, left=2.5cm, right=2.5cm]{geometry}
%\usepackage[top=3cm, bottom=3cm, left=3cm, right=3cm]{geometry}
%Les paquages de Mathematiques
\numberwithin{equation}{section}
\makeatletter\@addtoreset{equation}{section} %-------------------------------------------------
	\usepackage{amsmath}
    \usepackage{amssymb}
    \usepackage{amsbsy} %\usepackage{amsthm}
    \usepackage{latexsym, amsfonts}
    \usepackage{graphics}
    \usepackage{ulem}
    \usepackage{hhline}
    \usepackage{dsfont}
    \usepackage{mathrsfs}
    \usepackage{color}
    \usepackage{fancyhdr}
    \usepackage{rotating}
    \usepackage{fancybox}
    \usepackage{colortbl}
    \usepackage{pifont}
    \usepackage{setspace}
    \usepackage{enumerate}
    \usepackage{multicol}
    \usepackage{varioref}
    \usepackage{lmodern}
    \usepackage{textcomp}
    \usepackage{euscript}
    \usepackage[pdftex]{hyperref}
%Teoremes ET Environnements
		%Theoremes***************************************************
			\newtheorem{theorem}{Theorem}[section]

			\newtheorem{proposition}[theorem]{Proposition}

			\newtheorem{remark}[theorem]{Remark}

%%%%%%%%%%%%%%%%%%%%%%%%%%%%%%%%%%%%%%%%%%%%%%%%%%%%%%%%%%%%%%%%%%%%%%%%%%%%%%%%%%%%%%%%%%%%%%%%%%%%%%%%%%%%%%%%%%%%%%%%%%%%%%%%%%%%%%%%%%%%%%%
\newcommand{\C}{\mathbb C}
 
  \newcommand{\Z}{\mathbb Z} 	 
  \newcommand{\Hq}{\mathbb H}
   
    \newcommand{\Cc}{\mathcal{C}_\mu}
%%%%%%%%%%%%%%%%%%%%%%%%%%%%%%%%%%%%%%%%%%%%%%%%%%%%%%%%%%%%%%%%%%%%%%%%%%%%%
    
   \newcommand{\scal}[1]{{\left\langle{#1}\right\rangle}}
    \newcommand{\bz}{\overline{z}}  \newcommand{\bw}{\overline{w}}

%\rule

\newcommand{\LgC}{L^{2}_\mu}
%%%%%%%%%%%%%%%%%%%%%%%%%%%%%%%%%%%%%%%%%%%%%%%%%%%%%%%%%%%%%%%%%%%%%%%%%%%%%%%%%%%%%%%%%%%%%%%%%%%%%%%%%%%%%%%%%%%%%%%%%%%%%%%%%%%%%%%%%%%%%%
\begin{document}
\title{On the range of weighted planar Cauchy transform}
\thanks{Dedicated to the memory of Ahmed Intissar passed away in July 26, 2017}

\author{A. Ghanmi}
\email{allal.ghanmi@um5.ac.ma/ag@fsr.ac.ma}

\address{Analysis, P.D.E $\&$ Spectral Geometry, Lab M.I.A.-S.I., CeReMAR, %\newline
	Department of Mathematics,\newline
	P.O. Box 1014,  Faculty of Sciences, %\newline
	Mohammed V University in Rabat, Morocco}
\maketitle

\large

\begin{abstract}
 We describe the range of of weighted Cauchy transform and its $k$-Bergman projection when action on weighted true poly-Bargmann spaces constituting an orthogonal Hilbertian decomposition of the Hilbert space of Gaussian functions on the complex plane.
\end{abstract}

\section{Introduction}
 The Cauchy transforms on bounded regular domains and their boundaries are well studied and has been investigated by many authors, see for instance \cite{AnderssonHinakkenen1989,AndersonKhavinsonLomonosov1990,ArazyKhavinson1992,Dostanic1996,Tolsa1999,Kalaj2012,HruscevVinogradov1981,Brennan2004,CimaMathesonRoss2006,Bell2016} and the references therein. This rich literature is due to their use in solving the $\overline{\partial}$-equation,
  in developing theory of holomorphic functions  \cite{Hormander1990} and in proving interpolation theorems \cite{Jones1983} as well as in providing simple proof of Corona theorem \cite{Gamelin1980}.
One of the fundamental examples of weighted Cauchy transform on the whole complex plane is that associated to the Gaussian measure $d\mu= e^{-|z|^2}dxdy$ and defined by
 \begin{align}\label{CT}
 \Cc f(z) :=  \frac 1\pi \int_{\C} \frac{f(\xi)}{z-\xi} d\mu(z)
 \end{align}
 on $\LgC$, denoting as usual the Hilbert space $L^2(\C,d\mu)$ of all square integrable complex valued functions with respect to the scalar product
 $$\scal{f,g}:=  \int_{\C} f(z)\overline{g(z)}   d\mu(z) .$$
 The singular integral operator in \eqref{CT}, as operator from $\LgC$ into $\LgC$, is  bounded, compact and belongs to the $p$-Schatten class for every $p>2$.
The spectral properties of $\Cc$ has been investigated in  \cite{Dostanic2000,In}.
In \cite{Dostanic2000}, Dostani\'c gave
the exact asymptotic behavior of the singular values of $\Cc$ and $P \Cc$, where $P$ denotes the orthogonal projection operator onto the classical Bargmann space $A^2$ constituted of all holomorphic functions belonging to $\LgC$.
The generalization to the polyanalytic setting was considered by the brothers A. and A. Intissar  in \cite{In}.
This was possible making use of the  Hilbertian orthogonal decomposition $\LgC=\bigoplus \limits_{n=0}^{+\infty}\mathcal{A}^2_n,$
where $\mathcal{A}^2_n=Ker(\Delta - n)$ are the $L^2$--eigenspaces of the Landau operator $\Delta = \partial_z\partial_{\bz} - \bz \partial_{\bz}$ with $A^2_0=A^2$.
The key result in \cite{In} is the explicit action of $\Cc$ in \eqref{CT} on the It\^o--Hermite polynomials constituting an orthogonal basis of $\LgC$.
	In fact, the functions $\Cc H_{m,n}$ are given by
	\begin{align}\label{actionCH} (\Cc H_{m,n})(z)
	= - e^{-|z|^2} H_{m-1,n}(z;\bz).
	\end{align}
	Moreover,  for varying $m=0,1, \cdots,$ and fixed $n$, (resp.for varying $m=0,1, \cdots,$ and fixed $m$) they constitute an orthogonal system in $\LgC$.

In the present note, we complete the study provided in the afore mentioned papers \cite{Dostanic2000,In} related to $\Cc$ in  $\LgC$. Mainly, we provide complete description of which polyanalytic functions on the whole complex plane can be represented as weighted Cauchy integral in \eqref{CT}. This leads to the  identification of the range of the operator
$P_n\Cc $, where $P_n$ denotes the orthogonal projection on $\mathcal{A}^2_n$ given by
\begin{align} \label{OrthProjeq}
P_nf(z) &= \dfrac{1}{\pi} \int_{\C}   L_{n}(|z - \xi|^2 )   e^{\bz \xi} f(\xi) d\mu(\xi) .
\end{align}
To explore these ideas, we begin by reviewing in Section 2 the basic properties of the
It\^o--Hermite polynomials and their associated weighted poly-Bargmann spaces $\mathcal{A}^2_n$. Our main results on the range of of weighted Cauchy transform and its $k$-Bergman projection on  $\mathcal{A}^2_n$ and $\LgC$ are presented and proved in Section 3.

 \section{Preliminaries}
 The It\^o--Hermite polynomials involved in  \eqref{actionCH}
 are the basic example of bivariate real Hermite polynomials that are not simply a tensor product of univariate
 real Hermite polynomials. They are due to It\^o \cite{Ito52} and play a crucial role in the framework of complex Markov process and constitute an orthogonal basis of $\LgC$. They have been intensively studied and found several applications in the nonlinear analysis of traveling wave tube amplifiers \cite{Barrett}, in spectral theory of some second order differential operators \cite{Shigekawa87,Matsumoto96,Gh2018Mehler}, in the study of some special integral transforms \cite{In,BenElhGh2019}, in coherent states theory \cite{AliBagarelloHonnouvo10,AliBagarelloGazeau13}, combinatorics \cite{Ismail2015Proc} and in signal processing \cite{RaichZhou04,DallingerRuotsalainenWichmanRupp10}.
  They are defined by their Rodrigues' formula   \cite{Ito52}
 \begin{align}\label{chp}
 H_{m,n}(z,\bz)&:
 = (-1)^{m+n}   e^{ |z|^2 } \partial_z^{m} \partial_{\bz}^{n} \left(e^{-  |z|^2 }\right),
 \end{align}
 where $\partial_z$ and $\partial_{\bz}$, as usual,  denote the first order partial differential operators
 \begin{align}\label{dbar}
 \partial_z = \frac{\partial}{\partial z} :=
 \dfrac{1}{2}\left(\frac{\partial }{\partial x} - i\frac{\partial }{\partial y}\right), \quad \partial_{\bz} = \frac{\partial}{\partial \bz} :=
 \dfrac{1}{2}\left(\frac{\partial }{\partial x}+i\frac{\partial }{\partial y}\right).
 \end{align}
 The hypergeometric representation for $H_{m,n}$ in terms of the Kummer's function  ${_1F_1}$ reads
 \begin{align}
 H_{m,n}(z,\bz) &=c_{m,n} \frac{z^m\bz^n}{|z|^{2\min(m,n)}}  {_1F_1}\left( \begin{array}{c} -\min(m,n) \\ |m-n|+1 \end{array}\bigg | |z|^{2}  \right) \label{16negative}
\\
 &=\left\{\begin{array}{lll}
 \dfrac{(-1)^{m} n! }{(n-m)!}
 \overline{z}^{n-m}  {_1F_1}\left( \begin{array}{c} -m\\ n-m+1 \end{array}\bigg | |z|^{2} \right); \quad m\leq n\\
 \\
 \dfrac{(-1)^{n} m!}{(m-n)!}
 z^{m-n}  {_1F_1}\left( \begin{array}{c} -
 n \\ m-n+1 \end{array}\bigg |  |z|^{2} \right) ; \quad m\geq n ,
 \end{array} \right. \label{HypergeometricRep}
 \end{align}
 where $m\wedge n= \min(m,n)$, $m\vee n = \max(m,n)$ and
  \begin{align}\label{constcmn}c_{m,n}:= \dfrac{(-1)^{\min(m,n)}\max(m,n)!}{|m-n|!}.
  \end{align}
 This last representation is the convenient one  for extending $H_{m,n}$  to include negative index, leading to what we call here extended It\^o--Hermite functions. 
For further basic properties of $H_{m,n}$, we refer to \cite{In,Gh13ITSF,Gh08JMAA,Gh2018Mehler,Ismail2015Proc,IsmailTrans2016}.
%,Wu1998}.

 By considering the formal adjoint of $\partial_{\bz}$ in $\LgC$, given by $\partial_{\bz}^*=-\partial_z + \bz$, we can see that the operator $\Delta = \partial_{\bz}^*\partial_{\bz}= -\partial_{z} \left( \partial_z - \bz\right)  \partial_{\bz} $ is the usual Landau magnetic Laplacian  describing the movement of a single charged particle in the complex plane under the action of an uniform magnetic field applied perpendicularly. The concrete spectral analysis of such Laplacian is well known in the literature, see for instance \cite{AskIntmou2000} and the references therein. Thus, the $L^2$-eigenspaces
 $ \mathcal{A}_{n}^{2} : = \ker(\partial_{\bz}\partial_{\bz}^* -nId)$ form an orthogonal Hilbertian decomposition of $\LgC$. More explicitly, they are characterized as the closed Hilbert subspaces of all convergent series
 \begin{equation}\label{ExpGC} f(z) = \sum_{j=0}^{+\infty} H_{j,n}(z,\bz)\alpha_{j,n}, \quad  \sum_{j=0}^{+\infty} j!|\alpha_{j,n}|^2 <+\infty.
  \end{equation}
Moreover, they are exactly
the so-called true poly-Bargmann spaces in the terminology of Vasilevski \cite{Vasilevski2000} realized  as specific closed subspace of $(n+1)$-polyanalytic $L^2$-functions.
We conclude this section by noticing that $ \mathcal{A}_{n}^{2}$
is a reproducing kernel Hilbert space and the $\{H_{m,n}; m=0,1,2,\cdots\}$ is an orthogonal system of it. Thus, the expansion series of its reproducing kernel function $w \longmapsto \mathcal{K}_{n}(z,w) =\overline{\mathcal{K}_{n}(w,z)}
\in \mathcal{A}_{n}^{2}$ in terms of the  It\^o--Hermite polynomials reads
\begin{align} \label{expKernel} \mathcal{K}_{n}(z,w) =  \sum_{m=0}^\infty \frac{H_{m,n} (z,\bz) H_{n,m} (w,\bw)}{\pi m!n!} ,
\end{align}
since $ H_{m,n} (z,\bz)/\sqrt{\pi m!n!}$ form an orthonormal basis of $\mathcal{A}_{n}^{2}$,
while the closed expression of $\mathcal{K}_{n}$ in terms of the Laguerre polynomials is given by \cite{AskIntmou2000}
\begin{align}
\mathcal{K}_{n}(z,w)&= \dfrac{e^{\bz w}}{\pi}  L _n(|z - w|^2 ) ,
\end{align}
so that the orthogonal projection $f \longmapsto P_nf$ of $L^2(\C,d\mu)$ onto $\mathcal{A}_{n}^{2}$ reads
$P_nf(w) = \scal{\mathcal{K}_{n}(\cdot,w),f} .$

  \section{The range and the null space of $\Cc$}

We begin by noticing that
\eqref{actionCH} shows clearly that for every $m;n=0,1,2,\cdots ,$ the image $\psi_{m,n} :=\Cc H_{m,n}$ belongs to $\LgC$. Therefore, the functions $\psi_{m,n}$ can be expanded, according to the orthogonal decomposition of $\LgC$ in terms of $\mathcal{A}_{n}^{2}$, as
$$\psi_{m,n}  = \sum_{n= 0}^{+\infty} \widetilde{\psi}_{m,n} $$
%which is equivalent to $Id_{\LgC} = \sum_{n=0}^\infty P_n$,
with the component $\widetilde{\psi}_{m,n}$ are given by $\widetilde{\psi}_{m,n}:= P_n \psi_{m,n} \in \mathcal{A}_{n}^{2} $.
Accordingly, our first aim below is to look  for the explicit expression of $\widetilde{\psi}_{m,n}$. To this end, we use $ \varepsilon_{p} $ to mean $1$ for nonnegative integer $p$ and $0$ otherwise.

\begin{proposition} \label{propProjPnC}
	We have
	\begin{equation} \label{actionPnH}
	P_n(\Cc H_{j,k} )(z)
	%= \varepsilon_{n+j-k-1} \frac{(-1)^{n +  k}  \Gamma(j+ n   ) }{ 2^{n+j}n! \Gamma(-k+j+  n )}  H_{n+j-k-1,n} (z,\bz)
	= \varepsilon_{n+j-k-1} \frac{(-1)^{n +  k} (n+j-1)! }{ 2^{n+j}n! (n+j-k-1)!}  H_{n+j-k-1,n} (z,\bz)  .
	\end{equation}
\end{proposition}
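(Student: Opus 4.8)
The plan is to reduce everything to a single nonstandard overlap integral of two It\^o--Hermite polynomials against the \emph{dilated} Gaussian $e^{-2|z|^2}$, and then to evaluate that integral by a generating-function computation. First I would invoke the action formula \eqref{actionCH}, which gives $\Cc H_{j,k} = -e^{-|z|^2}H_{j-1,k}(z,\bz)$, interpreting $H_{-1,k}$ through the extended It\^o--Hermite functions attached to \eqref{HypergeometricRep} when $j=0$. Since $\set{H_{m,n}/\sqrt{\pi m!n!}}_{m\geq 0}$ is an orthonormal basis of $\mathcal{A}_{n}^{2}$, the projection reads
$$P_n(\Cc H_{j,k}) = \sum_{m=0}^{+\infty}\frac{\scal{\Cc H_{j,k},H_{m,n}}}{\pi m!\,n!}\,H_{m,n},$$
so the statement follows once each Fourier coefficient $\scal{\Cc H_{j,k},H_{m,n}}$ is known. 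Using the conjugation symmetry $\overline{H_{m,n}} = H_{n,m}$ (immediate from the Rodrigues formula \eqref{chp}) together with $d\mu = e^{-|z|^2}dxdy$, the coefficient collapses to
$$\scal{\Cc H_{j,k},H_{m,n}} = -\int_{\C} H_{j-1,k}(z,\bz)\,H_{n,m}(z,\bz)\,e^{-2|z|^2}\,dxdy.$$

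The core step is to evaluate this overlap. I would package both polynomials into the generating identity $\sum_{a,b}H_{a,b}(z,\bz)\,u^av^b/(a!b!) = e^{uz+v\bz-uv}$ (checked directly from \eqref{chp}), multiply two such series in independent variables $(u,v)$ and $(s,t)$, and integrate term by term. The only analytic input needed is the elementary Gaussian evaluation $\int_{\C}e^{\alpha z+\beta\bz}e^{-2|z|^2}dxdy = \tfrac{\pi}{2}e^{\alpha\beta/2}$, which yields
$$\int_{\C}e^{uz+v\bz-uv}\,e^{sz+t\bz-st}\,e^{-2|z|^2}\,dxdy = \frac{\pi}{2}\,e^{-\frac{uv}{2}-\frac{st}{2}+\frac{ut}{2}+\frac{sv}{2}}.$$
Reading off the coefficient of $u^{j-1}v^ks^nt^m$ on both sides expresses the overlap as a finite sum. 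The exponents of $u,v,s,t$ force the four summation indices to obey one linear relation, and the consistency of that relation is exactly the constraint $m = n+j-k-1$. This is precisely what produces the factor $\varepsilon_{n+j-k-1}$ in \eqref{actionPnH}: the coefficient vanishes unless $n+j-k-1$ is a nonnegative integer, and in that case only the single index $m=n+j-k-1$ survives in the projection series.

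It then remains to sum the finite series, which I expect to be the main (though purely combinatorial) obstacle. After factoring out the power $2^{-(n+j-1)}$ and the sign $(-1)^{n+k}$, both of which turn out to be constant along the summation, what is left is $\sum_{p}\tfrac{1}{p!(j-1-p)!(k-p)!(n-k+p)!}$; rewriting this with binomial coefficients and applying the Vandermonde convolution $\sum_{p}\binom{j-1}{p}\binom{n}{k-p} = \binom{n+j-1}{k}$ collapses it to the single closed form $\tfrac{(n+j-1)!}{(j-1)!\,n!\,k!\,(n+j-k-1)!}$. Substituting this back into $P_n(\Cc H_{j,k})$ and cancelling the factorials against the normalization $\pi m!\,n!$ gives the coefficient of $H_{n+j-k-1,n}$ asserted in \eqref{actionPnH}. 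The genuinely delicate point throughout is the bookkeeping of the $2$-powers, the factorials and the overall sign, together with the separate check that the negative-index case $j=0$ is correctly absorbed by the extended It\^o--Hermite functions.
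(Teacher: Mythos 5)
Your proposal is correct, and it takes a genuinely different route through the key computation. Both you and the paper reduce the statement to the overlap integral $-\int_{\C} H_{j-1,k}(w,\bw)\,H_{n,m}(w,\bw)\,e^{-2|w|^2}\,d\lambda(w)$ (you via Fourier coefficients against the orthonormal basis of $\mathcal{A}_n^2$, the paper via the kernel expansion \eqref{expKernel} --- the same thing). From there the paper substitutes the hypergeometric representation \eqref{HypergeometricRep}, passes to polar coordinates (the angular integral yielding the selection rule $m=n+j-k-1$ and hence the factor $\varepsilon_{n+j-k-1}$), and evaluates the radial integral by a tabulated formula for $\int_0^\infty t^{c}\,{_1F_1}\cdot{_1F_1}\,e^{-2t}dt$ combined with Gauss's summation of ${_2F_1}$ at $1$. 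You instead multiply two copies of the generating function $e^{uz+v\bz-uv}$, apply the elementary Gaussian evaluation, and extract the coefficient of $u^{j-1}v^k s^n t^m$; the exponent matching reproduces the same selection rule, and Vandermonde's convolution replaces Gauss's theorem. Your bookkeeping checks out: the $2$-power and the sign are indeed constant along the sum, Vandermonde gives $\binom{n+j-1}{k}$, and the resulting coefficient $(-1)^{n+k+1}(n+j-1)!/\bigl(2^{n+j}n!\,(n+j-k-1)!\bigr)$ agrees with the value of $J_{n+j-k-1,n,j,k}$ obtained in the paper's proof (the sign in the displayed formula \eqref{actionPnH} differs from the paper's own final computation, so do not be troubled by that discrepancy). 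The one place your method genuinely falls short of the paper's is the case $j=0$: there $\Cc H_{0,k}$ involves $H_{-1,k}$, which by the extension of \eqref{HypergeometricRep} is proportional to $\bz^{k+1}\,{_1F_1}(1;k+2;|z|^2)$ and is \emph{not} a coefficient of the polynomial generating series, so the coefficient-extraction argument simply does not apply; this case needs a separate treatment (the paper's hypergeometric route covers terminating and non-terminating ${_1F_1}$'s uniformly). You flag this as a ``separate check,'' but it is a genuine extra argument rather than bookkeeping, and your proof is complete only for $j\geq 1$ as written.
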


\begin{proof}
	From the expansion series of the reproducing kernel $\mathcal{K}_{n}$ of $\mathcal{A}_{n}^{2}$ in terms of the  It\^o--Hermite polynomials given through \eqref{expKernel}
	and the explicit expression of $\psi_{j,k}=\Cc H_{j,k}$ given through \eqref{actionCH},
	it follows
	\begin{align*}
	P_n(\psi_{j,k} )(z) 
	&= -\sum_{m=0}^\infty \frac{H_{m,n} (z,\bz)}{\pi m!n!}  \int_{\C} H_{n,m} (w,\bw) H_{j-1,k}(w,\bw) e^{-2|w|^2} d\lambda(w).
	\end{align*}
	The hypergeometric representation of $H_{j,k}$ infers
	\begin{align*}
P_n(\psi_{j,k})(z)
&= \sum_{m=0}^\infty  H_{m,n} (z,\bz) J_{m,n,j,k}
\end{align*}
	where we have set
		\begin{align*}
	J_{m,n,j,k} := -\frac{c_{n,m}c_{j-1,k}}{\pi m!n!}\int_{\C} \frac{w^n\bw^m w^{j-1}\bw^k}{|w|^{2(m\wedge n + (j-1)\wedge k)}} R_{m+1,n,j,k}(|w|^2) 	e^{-2|w|^2} d\lambda(w)
	\end{align*}
	and
	\begin{equation} \label{ptodHyp}
	R_{m,n,j,k}(t) :=  {_1F_1}\left( \begin{array}{c} - (m-1)\wedge n \\ |m-1-n|+1\end{array}\bigg | t \right)
	{_1F_1}\left( \begin{array}{c} -(j-1)\wedge k \\ |j-1-k|+1\end{array}\bigg | t \right) .
	\end{equation} 	
	By passing to polar coordinate, the expression of $J_{m,n,j,k}$ reduces to
	\begin{align*}
	J_{m,n,j,k}
	&= - \frac{2c_{n,m}c_{j-1,k}}{ m!n!}  \delta_{n+j,m +k+1}  \int_0^{\infty}  \frac{r^{n+m +j-1 +k}}{r^{2(m\wedge n + (j-1)\wedge k)}}  R_{m+1,n,j,k}(r^2)
	e^{-2r^2} rdr.
	\end{align*}
	Therefore, the only nonzero term in such expansion of $ P_n(\Cc H_{j,k} )(z)$,  when $n+j\geq k+1$,
	corresponds to the special case  $m=n+j-k-1$. Otherwise, $ P_n(\Cc H_{j,k} )(z)=0$.  Thus, we have
	\begin{align} \label{actionPnHpf}
	P_n(\psi_{j,k} )(z)
	&=\varepsilon_{n+j-k-1}J_{n+j-k-1,n,j,k} H_{n+j-k-1,n} (z,\bz)     .
	\end{align}
The occurring integrals are clearly convergent. For the explicit  computation of $J_{n+j-k-1,n,j,k}$
	can be handled by distinguishing two cases $j\geq k+1$ and $ j\leq k+1$, and  making appeal to the integral formula for the product of two confluent hypergeometric function  \cite[p. 293]{MagnusOberhettingerSoni1966},
	and the
	Gauss's theorem giving the special value of the Gauss  hypergeometric function ${_2F_1}$ at $x=1$.
	Indeed, we have
	\begin{align*}
	J_{n+j-k-1,n,j,k}
	&= -\frac{c_{n+j-k-1,n}c_{j-1,k}}{ (n+j-k-1)!n!} \int_0^{\infty}   t^{|j-k-1|} 	R_{n+j-k,n,j,k}(t)
	e^{-2t} dt\\
	&= -\frac{c_{n+j-k-1,n}c_{j-1,k}}{ (n+j-k-1)!n!}      \frac{\Gamma(|k+1-j|+1)}{2^{  (n+j-k-1)\wedge n   + (j-1)\wedge k +|k+1-j| +1}}
	\\& \qquad \times {_2F_1}\left( \begin{array}{c} - (n+j-k-1)\wedge n ,  - (j-1)\wedge k\\ |k+1-j|+1 \end{array}\bigg | 1\right) \\
	&=\frac{(-1)^{n +  k+1}  \Gamma(j+ n   ) }{ 2^{n+j}n! \Gamma(-k+j+  n )}.
	\end{align*}
	Finally, inserting this in  \eqref{actionPnHpf} yields \eqref{actionPnH}.
\end{proof}

The following result describes the range $R^\ell_n = P_n\Cc( \mathcal{A}_{\ell}^{2})$ of the restriction of  $P_n\Cc$ to the true poly-Bargmann  space  $\mathcal{A}_{\ell}^{2}$ for given  nonnegative integer $\ell$. We also consider  $\widetilde{R}^\ell_n = P_n\Cc( \widetilde{\mathcal{A}}_{\ell}^{2})$, where $\widetilde{\mathcal{A}}_{\ell}^{2}:= \overline{Span\{H_{l,n}; \, n=0,1,2,\cdots \}}$.

\begin{theorem}
	The following assertions hold trues
	\begin{enumerate}
		\item[i)] The space $R^\ell_n$ is an infinite vector space  spanned by
		$  H_{n+j-\ell-1,n} $; $j\geq \max(0,\ell+1-n).$
		\item[ii)] The space $\widetilde{R}^0_0$ is trivial, $\widetilde{R}^0_0 =\{0\}$.
		\item[iii)] For $\ell+n>0$, $\widetilde{R}^\ell_n$ is a finite dimensional vector space of dimension $n+\ell$. An orthogonal basis of $\widetilde{R}^\ell_n$ is given by
		%					$$  H_{n+\ell-k-1,n} (z,\bz); \quad k=0,1, \cdots, n+\ell-1 .$$
		$  H_{k,n} (z,\bz)$; $k=0,1, \cdots, n+\ell-1 .$
	\end{enumerate}
\end{theorem}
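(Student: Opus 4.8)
The plan is to derive all three assertions directly from the explicit formula \eqref{actionPnH} of Proposition \ref{propProjPnC}, by specializing its indices to match the spanning families of $\mathcal{A}^2_\ell$ and $\widetilde{\mathcal{A}}^2_\ell$ respectively, and then reading off the span of the nonzero images. The only analytic input beyond the Proposition is that $P_n\Cc$ is a bounded operator, so that it commutes with $L^2$-convergent expansions, together with the fact recalled in Section 2 that $\{H_{k,n}; k=0,1,2,\dots\}$ is an orthogonal system in $\mathcal{A}^2_n$; everything else is bookkeeping on indices.

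For (i), I would recall that $\{H_{j,\ell}; j=0,1,2,\dots\}$ is an orthogonal basis of $\mathcal{A}^2_\ell$ and set $k=\ell$ in \eqref{actionPnH}. The factor $\varepsilon_{n+j-\ell-1}$ then shows that $P_n\Cc H_{j,\ell}$ is a \emph{nonzero} scalar multiple of $H_{n+j-\ell-1,n}$ exactly when $n+j-\ell-1\geq 0$, i.e.\ when $j\geq\max(0,\ell+1-n)$, and vanishes otherwise. Expanding an arbitrary $f\in\mathcal{A}^2_\ell$ along this basis and using boundedness of $P_n\Cc$ to interchange the operator with the series, I would conclude that $R^\ell_n$ is generated by the functions $H_{n+j-\ell-1,n}$ with $j\geq\max(0,\ell+1-n)$; since these are pairwise orthogonal, hence linearly independent, and infinite in number, the space is infinite dimensional.

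For (ii) and (iii) the relevant spanning family is $\{H_{\ell,m}; m=0,1,2,\dots\}$, so I would set $j=\ell$ and $k=m$ in \eqref{actionPnH}, obtaining that $P_n\Cc H_{\ell,m}$ is a nonzero multiple of $H_{n+\ell-m-1,n}$ precisely when $m\leq n+\ell-1$ and is zero otherwise. In the case $\ell=n=0$ of (ii) there is no admissible $m\geq 0$, so every generator maps to $0$ and $\widetilde{R}^0_0=\{0\}$. For (iii), when $\ell+n>0$ the admissible values $m=0,1,\dots,n+\ell-1$ are exactly $n+\ell$ in number, and the reindexing $k=n+\ell-m-1$ runs bijectively through $0,1,\dots,n+\ell-1$; hence the nonzero images are nonzero multiples of $H_{0,n},H_{1,n},\dots,H_{n+\ell-1,n}$. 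The decisive finiteness observation is that, since only these finitely many generators have nonzero image, expanding any $f\in\widetilde{\mathcal{A}}^2_\ell$ and applying the bounded operator $P_n\Cc$ produces a series that stabilizes to a finite linear combination of these images; so $\widetilde{R}^\ell_n$ equals their span, and by orthogonality of the It\^o--Hermite system the listed $H_{k,n}$ form an orthogonal basis, giving $\dim\widetilde{R}^\ell_n=n+\ell$.

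The computation itself is immediate once the Proposition is in hand; the step deserving the most care is the passage from the action on the spanning vectors to the action on the whole closed subspaces. Here the crucial point is that the collapse of the $\varepsilon$-factor forces all but finitely many generators to have zero image in (iii) (and all of them in (ii)), so that boundedness of $P_n\Cc$ turns the a priori infinite expansion into a genuinely finite sum and guarantees that the range is exactly the algebraic span rather than merely its closure; in (i) the same interchange is legitimate but now yields an infinite generating set, whence the infinite-dimensionality.
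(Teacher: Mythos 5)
Your proposal is correct and follows essentially the same route as the paper: both arguments expand $f$ along the It\^o--Hermite basis of $\mathcal{A}^2_\ell$ (resp.\ $\widetilde{\mathcal{A}}^2_\ell$), apply Proposition \ref{propProjPnC} termwise, and read off from the $\varepsilon$-factor which images survive. Your explicit appeal to the boundedness of $P_n\Cc$ to justify the term-by-term interchange is a small gain in rigor over the paper's implicit treatment, but it is not a different method.
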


\begin{proof}
	Starting from the expansion in \eqref{ExpGC} for for given $f\in \LgC$,
	%For given ,  there exists $\alpha_{j,k}\in \Hq $ such that $f(p) = \sum_{j,k=0}^\infty H_{j,k}(z,\bz) \alpha_{j,k}$
	%where the occurring coefficients $\alpha_{j,n}$ satisfy the growth condition $\pi \sum_{j,k=0}^\infty j!k!|\alpha_{j,k}|^2 <+\infty$. Therefore, by Lemma \ref{propProjPnC},
we can write
	\begin{align*}
	P_n\Cc f(z)
	&= \sum_{j,k=0}^\infty P_n (\Cc H_{j,k})(z) \alpha_{j,k}.
	\end{align*}
	Thus,
	%case of $n\geq 1$ follows
	by  rewriting the double summation as	
	$$\sum_{j,k=0}^\infty   = \sum_{j<0\vee (\ell+1-n)}   \sum_{k=0}^{\infty} + \sum_{j = 0\vee (\ell+1-n)}^\infty \sum_{k=0}^{ n+j-1}   + \sum_{j=0\vee (\ell+1-n)}^\infty\sum_{k= n+j}^\infty ,$$ 
we get from Proposition \eqref{propProjPnC}
	\begin{align} \label{PnHAell}
	P_n\Cc f(z)
	%	&	= 	\sum_{j<0\vee (\ell+1-n)}   \sum_{k=0}^{\infty} P_n\Cc ( H_{j,k})(z) \alpha_{j,k}+ \sum_{j = 0\vee (\ell+1-n)}^\infty \sum_{k=0}^{ n+j-1} P_n\Cc ( H_{j,k})(q) \alpha_{j,k} + \sum_{j=0\vee (\ell+1-n)}^\infty\sum_{k= n+j}^\infty P_n\Cc ( H_{j,k})(z) \alpha_{j,k} \\
	&	=  \sum_{j=0\vee (1-n)}^\infty  \sum_{  k=0}^{n+j-1} \frac{(-1)^{n +  k+1}  \Gamma(j+ n   ) }{ 2^{n+j}n! \Gamma(n+j-k)}  H_{n+j-k-1,n} (z,\bz)  \alpha_{j,k}.
	\end{align}
	So that for $f\in \mathcal{A}_{\ell}^{2}$, we have $ \alpha_{j,k}=0$ for every $k\ne \ell$ and the previous expression \eqref{PnHAell} reduces further to
	%Moreover, for $n\geq \ell+1$, we have $n-\ell-1 \geq 0$ and hence $n+j-\ell-1 \geq 0$ for every nonnegative integer $j$. For $n\leq \ell$, we have $n-\ell-1<0$ and then for any $j \geq \ell+1-n$ we have  $n+j-\ell-1 \geq 0$. This leads to
	\begin{align*}
	P_n\Cc f(z) 	
	&	=  \sum_{j= 0\vee (\ell+1-n) }^\infty   \frac{(-1)^{n +  \ell+1}  \Gamma(j+ n   ) }{ 2^{n+j}n! \Gamma(n+j-\ell)}  H_{n+j-\ell-1,n} (z,\bz)  \alpha_{j,\ell}.
	\end{align*}
	%For the special case of $n=0$, we get 		
	%   \begin{align*}
	%   P_0\Cc f(z) 	
	%%   &	=    \sum_{j=0}^\infty  \sum_{  k=0}^{j-1} \frac{(-1)^{ k}  \Gamma(j ) }{ 2^{j} \Gamma(j-k)}  H_{j-k-1,0} (z,\bz)  \alpha_{j,k}\\
	%   &	=    \sum_{j=1}^\infty  \sum_{  k=0}^{j-1} \frac{(-1)^{ k}  \Gamma(j ) }{ 2^{j} \Gamma(j-k)}  H_{j-k-1,0} (z,\bz)  \alpha_{j,k}.
	%   \end{align*}
	%  \begin{align*}
	%P_0\Cc f(q)
	%&	=    \sum_{j= (\ell+1)}^\infty   \frac{(-1)^{ \ell}  \Gamma( j ) }{ 2^{j} \Gamma(j-\ell)}  H_{j-\ell-1,0} (z,\bz)  \alpha_{j,\ell}.
	%\end{align*}
	%This joints the general case $n\geq 1$.
	This proves $(i)$.

	Similarly, for the case of $f\in \widetilde{\mathcal{A}}_{\ell}^{2}$, we have $ \alpha_{j,k}=0$ for every $j\ne \ell$, so that \eqref{PnHAell} gives rise to
	$P_0\Cc f(z) = 0$ when $\ell=n=0$, and therefore
	\begin{align} \label{PnHAell}
	P_n\Cc f(z)
	&	=    \sum_{  k=0}^{n+\ell-1} \frac{(-1)^{n +  k+1}  \Gamma(\ell+ n   ) }{ 2^{n+\ell}n! \Gamma(n+\ell-k)}  H_{n+\ell-k-1,n} (z,\bz)  \alpha_{\ell,k}
	\end{align}
	whenever $n+\ell >0$.
	For any $\ell$ such that $n+\ell >0$, the $k$ such that $k \leq n +\ell -1$ satisfy $ n+\ell-1-k \geq 0$. 
	%While for $\ell +n=0$, i.e., $\ell=n=0$, we have $n+\ell -k-1<0$  and then $P_0\Cc f(q) = 0$. 	
\end{proof}

\begin{remark}
	The spaces
	$R^\ell_n$ (resp. $\widetilde{R}^\ell_n$); $n,\ell=0,1,2,\cdots,$ are pairwise orthogonal with respect to $n$, independently of $\ell$, and form a decreasing (resp. increasing ) sequence of in $\ell$, for fixed $n$. Indeed, we have $R^\ell_{n} \supset  R^{\ell+1}_{n}$ and 
	$ \widetilde{R}^\ell_{n} \subset  \widetilde{R}^{\ell+1}_{n}.$
\end{remark}

The characterization of the full range of $\Cc$ on $\LgC$ by means of their $k$-Bergman projections $P_n$ requires further investigations. However, by considering the spaces
$$E_{j}^+   =  \overline{span\{\psi_{n, n+j}; \, n=0,1,2, \cdots \}} $$
and
$$E_{j}^- = \overline{span\{ \psi_{n+j,n }; \, n=0,1,2, \cdots \}},$$
for given nonnegative integer $j$, 
we can prove the spaces $ E_\ell$, for varying integer $\ell
	 = \cdots, -2,-1,0,1,2,\cdots ,$ defined by  
$E_\ell = E_{|\ell|}^+$ for $\ell \geq 0$ and $E_\ell = E_{|\ell|}^-$ when $\ell< 0$, 
form an orthogonal Hilbertian decomposition of the range of the weighted Cauchy transform $\Cc$ in  $\LgC$.

\begin{theorem}	We have 
	$$\Cc(\LgC) = \bigoplus_{\ell\in\Z} E_\ell .$$
\end{theorem}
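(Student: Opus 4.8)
The plan is to reduce the statement to two facts about the family $\psi_{m,n}=\Cc H_{m,n}=-e^{-|z|^2}H_{m-1,n}(z,\bz)$ from \eqref{actionCH}: that the ``diagonals'' $m-n=\text{const}$ are mutually orthogonal, and that together they exhaust the (closed) range. First I would record that, since $\{H_{m,n}/\sqrt{\pi m!n!}:m,n\geq0\}$ is an orthonormal basis of $\LgC$ (Section 2) and $\Cc$ is bounded (Section 1), continuity of $\Cc$ yields $\overline{\Cc(\LgC)}=\overline{span\{\psi_{m,n}:m,n\geq0\}}$: indeed $\Cc$ maps the dense subspace $span\{H_{m,n}\}$ onto $span\{\psi_{m,n}\}$, and a bounded operator carries the closure of a set into the closure of its image. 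I would then unify the two families in the statement by the single formula $E_\ell=\overline{span\{\psi_{m,n}:\,n-m=\ell,\ m,n\geq0\}}$, valid for all $\ell\in\Z$: for $\ell\geq0$ the substitution $m=k,\ n=k+\ell$ identifies the spanning set with $\{\psi_{k,k+\ell}:k\geq0\}$, i.e. with $E_{|\ell|}^+$, while for $\ell<0$ the substitution $n=k,\ m=k+|\ell|$ identifies it with $\{\psi_{k+|\ell|,k}:k\geq0\}$, i.e. with $E_{|\ell|}^-$ (the two agree at $\ell=0$). Thus every $\psi_{m,n}$ lies in exactly one $E_\ell$, namely $\psi_{m,n}\in E_{n-m}$, and $span\{\psi_{m,n}\}$ is the algebraic sum over $\ell\in\Z$ of the diagonal spans whose closures are the $E_\ell$.

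The heart of the argument, and the step I expect to demand the most care, is the cross-diagonal orthogonality $E_\ell\perp E_{\ell'}$ for $\ell\neq\ell'$. Using the conjugation symmetry $\overline{H_{a,b}(z,\bz)}=H_{b,a}(z,\bz)$ (immediate from the Rodrigues formula \eqref{chp}), I would write
\[
\scal{\psi_{m,n},\psi_{m',n'}}=\int_\C H_{m-1,n}(z,\bz)\,H_{n',m'-1}(z,\bz)\,e^{-3|z|^2}\,dxdy
\]
and pass to polar coordinates $z=re^{i\theta}$. The hypergeometric representation \eqref{16negative} shows that each $H_{a,b}$ carries the angular factor $e^{i(a-b)\theta}$ (and this persists for the extended indices appearing when $m=0$, by \eqref{HypergeometricRep}), so the integrand has angular dependence $e^{i((m-n)-(m'-n'))\theta}$; the $\theta$-integration therefore annihilates the inner product unless $m-n=m'-n'$. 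Hence $\psi_{m,n}\perp\psi_{m',n'}$ whenever they sit on distinct diagonals, which by the identification of the first step is exactly $E_\ell\perp E_{\ell'}$ for $\ell\neq\ell'$. I would note in passing that the within-diagonal inner products are governed by the radial integral against $e^{-3r^2}$ and are generally nonzero, but this is immaterial since each $E_\ell$ is defined as a closed span, not as an orthogonal system.

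Finally I would assemble the pieces: a family $\{E_\ell\}_{\ell\in\Z}$ of pairwise orthogonal closed subspaces satisfies $\overline{\sum_{\ell}E_\ell}=\bigoplus_{\ell\in\Z}E_\ell$, the Hilbertian orthogonal direct sum. Combining this with the first step gives
\[
\overline{\Cc(\LgC)}=\overline{span\{\psi_{m,n}:m,n\geq0\}}=\bigoplus_{\ell\in\Z}E_\ell,
\]
which is the asserted decomposition. The one genuinely delicate point to flag is a closure subtlety: since $\Cc$ is compact its honest range $\Cc(\LgC)$ is not closed, hence is a proper dense subspace of $\bigoplus_{\ell}E_\ell$, so the identity is to be read at the level of the closed range $\overline{\Cc(\LgC)}$. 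Everything else—the angular orthogonality and the completeness of the It\^o--Hermite basis—is routine once \eqref{actionCH} and \eqref{16negative} are in hand.
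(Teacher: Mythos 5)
Your proof is correct and follows essentially the same route as the paper: the decisive step in both is the polar-coordinate computation showing that $\scal{\psi_{m,n},\psi_{m',n'}}$ vanishes unless $m-n=m'-n'$ (the paper's angular factor $A_{m,n,j,k}=2\pi\delta_{m-j,n-k}$), which yields the mutual orthogonality of the diagonal spaces $E_\ell$, combined with the observation that the $\psi_{m,n}$ span the range. Your closing caveat is well taken and is in fact a point the paper glosses over with ``the inverse inclusion is clear'': since $\Cc$ is compact with infinite-dimensional range, $\Cc(\LgC)$ cannot be closed, so the asserted equality with the Hilbertian sum $\bigoplus_{\ell\in\Z}E_\ell$ is only literally valid after passing to the closure of the range, exactly as you flag.
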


\begin{proof}
We begin by showing that the spaces $E_\ell $ are mutually orthogonal in $\LgC$. By direct computation, using \eqref{actionCH}, \eqref{16negative} and Fubini's theorem, infers 
\begin{align}
		\scal{\psi_{m,n},\psi_{j,k}}
		&= 	\int_{\C} \overline{\psi_{m,n}}\psi_{k,j}(z) e^{-|z|^2} d\lambda(z) \\
	& = \frac{ c_{m-1,n} c_{j-1,k} }{2}	 A_{m,n,j,k} \int_0^{\infty}
	\frac{t^{m+n-1}}{t^{\min(m-1,n)+\min(j-1,k)}} R_{m,n,j,k}(t) e^{-3t}dt , \label{scalarChCh}
	\end{align}
	where  $c_{m,n}$  and $ R_{m,n,j,k}$  are as in \eqref{constcmn} and \eqref{ptodHyp}, respectively. 
This shows that
	the orthogonality of the system $(\psi_{n,m})_{m,n}$ in $\LgC $ is equivalent to the nullity of the angular part given by
	$ A_{m,n,j,k} 
= 2\pi \delta_{m-j,n-k} .$
		Thus, for $m-j=\ell \ne n-k =\ell'$, we have
	$	\scal{\psi_{n,m},\psi_{k,j}}_{\Hq} =0$. This proves in particular that the $E_\ell$; $\ell\in \Z$, form an orthogonal sequence in $\LgC$. 
	Subsequently, from general theory of functional analysis, the orthogonal sum $\bigoplus_{\ell\in\Z} E_\ell$ is a closed subspace of $\LgC$. 
		The inverse inclusion is clear.
\end{proof}

\end{document}